\documentclass{amsart}
\usepackage{amssymb,graphicx}

\title[Local Lipschitz geometry of weighted homogeneous surfaces]
{Local Lipschitz geometry of weighted homogeneous surfaces}
\author{L. Birbrair}
\author{A. Fernandes}
\address{Departamento de Matem\'atica, Universidade Federal do Cear\'a, Av.
Mister Hull s/n,Campus do PICI, Bloco 914,CEP: 60.455-760 -
Fortaleza - CE - Brasil.} \email{birb@ufc.br}
\email{alex@mat.ufc.br}
\date{\today}




\def \Q {\mathbb{Q}}

\def \R {\mathbb{R}}


\newtheorem{theorem}{Theorem}[section]
\newtheorem{lemma}[theorem]{Lemma}
\newtheorem{proposition}[theorem]{Proposition}

\theoremstyle{definition}

\newtheorem{example}[theorem]{Example}
\theoremstyle{remark}
\newtheorem{remark}[theorem]{Remark}

\numberwithin{equation}{section}





\begin{document}

\begin{abstract}
  We compute Hoelder Complexes,i.e. the complete bi-Lipschitz invariants,
  for germs of real weighed homogeneous algebraic or semialgebraic surfaces.
\end{abstract}

\maketitle

\section{Introduction}

A basic question of Metric Theory of Singularities is Lipschitz
Classification of Singular Sets. Some recent results of several
authors are devoted to Lipschitz invariants of semialgebraic or
algebraic sets with singularities. (See, for example,
\cite{B},\cite{BB},\cite{F},\cite{KP},\cite{V}).

H\"older Complexes, constructed in \cite{B}, are complete
bi-Lipschitz invariants for germs of semialgebraic surfaces. L\^e
Dung Trang asked the following natural question: what is the
relation between Lipschitz invariants and the algebraic nature of
the semialgebraic sets? In this paper we give a complete answer to
this question for weighted homogeneous surfaces in $\R^n$, i.e. we
compute the exponents in H\"older Complexes of these sets.

In order to compute these exponents we consider weighted homogeneous
singular foliations in $\R^n$ and prove that the corresponding
H\"older Exponents can be computed in terms of orders of contact of
leaves of such a foliation.

We would like to thank Professor L\^e Dung Trang for a very
interesting question and very important comments on a preliminary
version of this paper and Professor Fuensanta Aroca for interesting
discussions.

\section{Preliminaries and main results}

We are going to recall a definition of Canonical H\"older Complex
presented in \cite{B}.

\emph{An Abstract H\"older Complex} is a pair $(\Gamma,\beta)$,
where $\Gamma$ is a finite graph, $E_{\Gamma}$ is the set of edges
of $\Gamma$ and $\beta\colon E_{\Gamma}\rightarrow \Q$ is a rational
valued function such that for each $g\in E_{\Gamma}$, we have
$\beta(g)\geq 1$. A vertex $a\in V_{\Gamma}$ is called \emph{smooth}
or \emph{artificial} if $a$ is connected with exactly two edges and
these edges connect $a$ with exactly two vertices of $\Gamma$. A
vertex $a$ is called a \emph{loop vertex} if $a$ is connected with
exactly two edges and these edges connect $a$ with the same vertex
of $\Gamma$.

An Abstract H\"older Complex $(\Gamma,\beta)$ is called
\emph{Canonical} or \emph{Simplified} if
\begin{enumerate}
\item $\Gamma$ has no artificial vertices;
\item for any loop vertex connected with two edges $g_1$ and $g_2$
we have $\beta(g_1)=\beta(g_2)$.
\end{enumerate}

\emph{The Standard H\"older Triangle} $T_{\beta}$ is a semialgebraic
subset of $\R^2$ defined as follows:
$$T_{\beta}=\{(x,y)\in\R^2 \ : \ 0\leq y\leq x^{\beta}, \ 0\leq x\leq 1\},$$
where $\beta \geq 1$ is a rational number.

A semialgebraic set $X\subset\R^n$ is called a \emph{Geometric
H\"older Complex associated to} $(\Gamma,\beta)$ if
\begin{enumerate}
\item there exists a homeomorphism $\Phi\colon cone(\Gamma)\rightarrow
X$, where $cone(\Gamma)$ is a cone over $\Gamma$;
\item for any edge $g\in E_{\Gamma}$, the image of the set
$cone(g)\subset cone({\Gamma})$ by the map $\Phi$ is
semialgebraically bi-Lipschitz equivalent, with respect to the inner
metric, to $T_{\beta}$, where $\beta=\beta(g)$. If
$\Psi\colon\Phi(cone(g))\rightarrow T_{\beta}$ is the corresponding
bi-Lipschitz map, then $\Psi(x_0)=0$, where $x_0\in X$ is the image
of the vertex of $cone(\Gamma)$ by the map $\Phi$.
\end{enumerate}

\begin{theorem}\cite{B}\label{geometric_holder_complex}
Let $X\subset\R^n$ be a closed semialgebraic set of dimension 2 and
let $x_0\in X$. Then there exists a unique (up to isomorphism)
Canonical H\"older Complex $(\Gamma,\beta)$ such that, for
sufficiently small $\epsilon >0$, $X\cap B(x_0,\epsilon)$ is a
Geometric H\"older Complex, associated to $(\Gamma,\beta)$.
\end{theorem}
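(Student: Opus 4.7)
The plan is to obtain $(\Gamma,\beta)$ directly from the local metric geometry of $(X,x_0)$, verify it can be made canonical, and then prove it is uniquely determined up to isomorphism. By the local conic structure of semialgebraic germs, for all sufficiently small $\epsilon > 0$ the intersection $X \cap B(x_0,\epsilon)$ is semialgebraically homeomorphic to the cone over the link $L_\epsilon = X \cap S(x_0,\epsilon)$, and the topological type of $L_\epsilon$ is independent of $\epsilon$. Since $\dim X = 2$, $L_\epsilon$ is a compact semialgebraic set of dimension $1$, hence a finite graph $\Gamma_0$; its vertices correspond to singular points of the link and to the points forced by the stratified decomposition of $X$.

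For each edge $g \in E_{\Gamma_0}$, the corresponding piece $X_g \subset X \cap B(x_0,\epsilon)$ is a semialgebraic surface germ bounded by two semialgebraic arcs through $x_0$. I would define $\beta(g) \in \Q$ as the order of contact between these two arcs, measured by the \L ojasiewicz exponent; rationality follows from the standard semialgebraic \L ojasiewicz theory. I would then construct an explicit semialgebraic homeomorphism $\Psi_g \colon X_g \to T_{\beta(g)}$ by parametrizing $X_g$ along its two boundary arcs and rescaling transversely, and verify that $\Psi_g$ is bi-Lipschitz with respect to the inner metric. Assembling the $\Psi_g$'s realizes $X \cap B(x_0,\epsilon)$ as a geometric H\"older complex associated to $(\Gamma_0,\beta_0)$.

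To pass to canonical form, I would perform two local reductions. First, at every artificial vertex, delete the vertex and replace its two incident edges (with exponents $\beta_1,\beta_2$) by a single edge with exponent $\min(\beta_1,\beta_2)$: gluing $T_{\beta_1}$ to $T_{\beta_2}$ along a common side is inner-bi-Lipschitz equivalent to $T_{\min(\beta_1,\beta_2)}$. Second, at every loop vertex with $\beta_1 \neq \beta_2$, replace both incident edges by copies with exponent $\min(\beta_1,\beta_2)$ by the same argument. Iterating yields a canonical complex. For uniqueness, I would show that the graph is intrinsically recovered from the combinatorics of $L_\epsilon$ after the simplification rules are imposed, and that $\beta$ on each edge is an inner-bi-Lipschitz invariant of the standard triangle $T_\beta$ -- detected, for example, as the exponent governing the inner distance between pairs of arcs issued from the apex along the two sides.

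The principal technical difficulty lies in the explicit construction of the semialgebraic inner-bi-Lipschitz equivalence $X_g \to T_{\beta(g)}$. This requires a Lipschitz-controlled trivialization of $X_g$ adapted to the radial structure at $x_0$, together with sharp estimates on the distortion arising from the non-smoothness of $X$ near the boundary arcs of the piece; without such control one cannot guarantee that a piecewise parametrization produced by cell decomposition is Lipschitz across cells. A secondary obstacle is verifying that the inner-bi-Lipschitz classification of the models $\{T_\beta\}_{\beta \in \Q,\ \beta \ge 1}$ is injective in $\beta$, which is needed to conclude uniqueness of the exponent on each edge.
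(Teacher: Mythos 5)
The paper does not actually prove Theorem \ref{geometric_holder_complex}: it is imported verbatim from \cite{B}, so there is no internal proof to compare against. Your outline does reproduce the architecture of \cite{B} (decompose the link into edges, realize each cone over an edge as a H\"older triangle, absorb artificial and loop vertices by the $\min$ rule, and prove uniqueness by exhibiting the exponents as inner bi-Lipschitz invariants), and the two difficulties you flag at the end are indeed where the real work lies.

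There is, however, one concrete gap in the plan as written: you define $\beta(g)$ as the order of contact of the two boundary arcs of the piece $X_g$. That order of contact is measured in the outer (Euclidean) metric, while the H\"older exponent of $X_g$ is an invariant of the \emph{inner} metric; the two disagree whenever $X_g$ is not normally embedded. For instance, a surface in $\R^3$ whose two boundary arcs have outer tangency of order $2$ but whose link at radius $t$ is an arc of length comparable to $t$ is a $1$-H\"older triangle, not a $2$-H\"older triangle. A decomposition of the link at the singular points of the link does not automatically yield normally embedded pieces, so your $\beta_0$ can be strictly larger than the true exponent on some edges, and the map $\Psi_g$ you propose cannot then be inner bi-Lipschitz. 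The missing ingredient is a refinement of the decomposition into normally embedded pieces (the pancake, or $L$-regular, decomposition in the spirit of \cite{Ku}); this is precisely property (3) that the present paper extracts from \cite{B} in its proof of Theorem \ref{calculation_of_beta_exponent}. Once that is in place, your formula for $\beta(g)$, the $\min$ simplification rule (justified via Proposition \ref{isosceles}), and the uniqueness argument all go through. Your ``secondary obstacle'' is not really one: $T_\beta$ is normally embedded in $\R^2$, so its exponent is recovered as the infimum of contact orders of pairs of arcs through the apex, which separates the models $T_\beta$ for distinct $\beta$.
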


Let $a_1\geq a_2\geq\dots\geq a_n$ be a finite sequence of positive
integer numbers. A \emph{Weighted homogeneous foliation}
$\mathcal{F}_{(a_1,\dots,a_n)}$ in $\R^n$ with the weights $a_1\geq
a_2\geq\dots\geq a_n$ is a singular foliation defined as a family of
curves $(t^{a_1}x_1,\dots,t^{a_n}x_n)$, where
$x=(x_1,\dots,x_n)\in\R^n$ and $t\in  (0,+\infty)$. The
\emph{Standard Newton Simplex}, associated to a weighted homogeneous
foliation $\mathcal{F}_{(a_1,\dots,a_n)}$ is the convex hull of the
points $(a_1,0\dots,0), (0,a_2,0,\dots,0),\dots,(0,\dots,0,a_n)$.
All the 1-dimensional faces of the Standard Newton Simplex belong to
subspaces
$$\R^2_{ij}=span\{(0,\dots,a_i,0,\dots,0),(0,\dots,a_j,0,\dots,0)\}.$$
The quotient $\displaystyle\frac{a_i}{a_j} \ (i<j)$ is called the
\emph{direction} of $(i,j)$-1-dimensional face of a Standard Newton
Simplex.

Let $x\neq 0$ be a point in $\R^n$. Denote by $\gamma_x$ the closure
of the leaf of $\mathcal{F}_{(a_1,\dots,a_n)}$ passing through the
point $x$. A set $X\subset\R^n$ is called
\emph{$(a_1,\dots,a_n)$-weighted homogeneous} if, for all $x\in X$,
we have $\gamma_x\subset X$.

\begin{example} Let $f\colon \R^3\rightarrow \R$  be a weighted homogeneous
polynomial with respect to weights $a\geq b\geq c$. Then
$X=f^{-1}(0)$ is an $(a,b,c)$-weighted homogeneous algebraic set.
\end{example}

\begin{example} Let $F\colon \R^m \rightarrow \R^n$ be a polynomial
map with coordinate functions $F=(f_1,\dots,f_n)$ such that
$f_1,\dots,f_n$ are weighted homogeneous polynomials with degrees
$(d_1,\dots,d_n)$. Then $F(R^m)$ is a $(d_1,\dots,d_n)$-weighted
homogeneous semialgebraic set.
\end{example}

\begin{proposition}\label{sing(X)}
Let $X\subset\R^n$ be a semialgebraic $(a_1,\dots,a_n)$-weighted
homogeneous subset. Then $Sing(X)$ is also an
$(a_1,\dots,a_n)$-weighted homogeneous set.
\end{proposition}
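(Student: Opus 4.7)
The plan is to exploit the natural flow of the foliation. For each $t>0$ define the map $\phi_t\colon\R^n\to\R^n$ by $\phi_t(x_1,\dots,x_n)=(t^{a_1}x_1,\dots,t^{a_n}x_n)$; its orbit through a point $x\ne 0$ is precisely the leaf of $\mathcal{F}_{(a_1,\dots,a_n)}$ through $x$. I would first observe that $\phi_t$ is a polynomial, hence semialgebraic, diffeomorphism of $\R^n$ with polynomial inverse $\phi_{1/t}$ and nowhere-vanishing Jacobian $t^{a_1+\dots+a_n}$.

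Next, I would translate the weighted homogeneity hypothesis ``$\gamma_x\subset X$ for every $x\in X$'' into the group-theoretic statement $\phi_t(X)\subseteq X$ for all $t>0$. Applying the same inclusion with $1/t$ in place of $t$ yields the reverse inclusion, so $\phi_t(X)=X$ for every $t>0$; that is, each $\phi_t$ restricts to a semialgebraic self-homeomorphism of $X$ that extends to a global diffeomorphism of the ambient $\R^n$.

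The main observation then follows: a diffeomorphism of $\R^n$ that preserves $X$ setwise must carry smooth points of $X$ to smooth points and singular points to singular points, hence $\phi_t(Sing(X))=Sing(X)$ for every $t>0$. Consequently, for any $x\in Sing(X)\setminus\{0\}$, the entire leaf $\{\phi_t(x):t>0\}$ lies in $Sing(X)$.

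The only remaining subtlety, and the place where a small extra argument is needed, concerns passing to the closure. Since the weights $a_i$ are positive, the only limit point that $\gamma_x$ adds to the leaf is the origin, reached as $t\to 0^+$. Using that $X$ is closed (the setting being germs at $0$) and that $Sing(X)$ is closed in $X$, one concludes that whenever $Sing(X)\setminus\{0\}\ne\emptyset$ we automatically have $0\in Sing(X)$, so $\gamma_x\subset Sing(X)$ as required. This closure step is the main, though mild, obstacle; the rest of the proof is essentially the observation that a global diffeomorphism respects the smooth/singular stratification.
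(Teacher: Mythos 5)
Your proof is correct and follows essentially the same route as the paper: introduce the scaling maps $\varphi_t(x)=(t^{a_1}x_1,\dots,t^{a_n}x_n)$, observe that each is a diffeomorphism satisfying $\varphi_t(X)=X$, and conclude $\varphi_t(Sing(X))=Sing(X)$. Your extra remark about the closure of the leaf containing the origin is a sensible detail that the paper leaves implicit.
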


\begin{proof} Let us consider a map
$\varphi_t\colon\R^n\rightarrow\R^n$ defined as follows:
$$\varphi_t(x_1,\dots,x_n)=(t^{a_1}x_1,\dots,t^{a_n}x_n).$$
Note that a restriction of this map to $\R^n-\{0\}$ is a
diffeomorphism, for all $t>0$. If $X\subset\R^n$ is a semialgebraic
$(a_1,\dots,a_n)$-weighted homogeneous subset, then, for all $t>0$,
$\varphi_t(X)=X$. Thus, $\varphi_t(Sing(X))=Sing(X)$ and hence
$Sing(X)$ is $(a_1,\dots,a_n)$-weighted homogeneous set.
\end{proof}

A closed semialgebraic set $X\subset\R^n$ is called a
\emph{semialgebraic surface} if $\mathrm{dim} X =2$.

\begin{theorem}\label{2.5} Let $X\subset \R^n$ be a
semialgebraic $(a_1,\dots,a_n)$-weighted homogeneous surface. Let
$(\Gamma,\beta)$ be the Canonical H\"older Complex of $X$ at $0$.
Then, for each $g\in E_{\Gamma}$, $\beta(g)=1$ or is one of the
$(i,j)$-directions of the Standard Newton Simplex associated to
$\mathcal{F}_{(a_1,\dots,a_n)}$.
\end{theorem}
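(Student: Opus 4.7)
The plan is to identify every exponent $\beta(g)$ as the order of contact between two leaves of $\mathcal{F}_{(a_1,\dots,a_n)}$ contained in $X$. By Theorem~\ref{geometric_holder_complex}, the germ of $X$ at $0$ decomposes into pieces $T_g=\Phi(\mathrm{cone}(g))$, each inner-bi-Lipschitz to the standard triangle $T_{\beta(g)}$. Since $X$ is $(a_1,\dots,a_n)$-weighted homogeneous we have $\varphi_t(X)=X$ for all $t>0$, so the leaf $\gamma_x$ through every $x\in X\setminus\{0\}$ is contained in $X$. Combined with Proposition~\ref{sing(X)}, this shows that $X\setminus Sing(X)$ is foliated by the $1$-dimensional leaves of $\mathcal{F}|_X$, and each piece $T_g$ inherits a saturation by such leaves.

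The key computation is the following. For $x,y\in\R^n\setminus\{0\}$ set $a_{j(x)}=\min\{a_i\colon x_i\neq 0\}$ and $a_{k(x,y)}=\min\{a_i\colon x_i\neq y_i\}$. A direct estimate gives, as $t\to 0^+$,
\[
\|\varphi_t(x)\|\asymp t^{a_{j(x)}},\qquad \|\varphi_t(x)-\varphi_t(y)\|\asymp t^{a_{k(x,y)}}.
\]
When the leaves $\gamma_x,\gamma_y$ share the leading index $j(x)=j(y)=j$, reparametrising by the radius $r\asymp t^{a_j}$ yields
\[
\|\varphi_t(x)-\varphi_t(y)\|\asymp r^{a_{k(x,y)}/a_j}.
\]
Since $a_{k(x,y)}\geq a_j$, this Hölder contact exponent equals $1$ (if $k=j$) or the $(i,j)$-direction $a_i/a_j$ of the Standard Newton Simplex with $i=k<j$.

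To conclude, I would show that for every edge $g$ the exponent $\beta(g)$ coincides with such a contact exponent. The bi-Lipschitz identification $T_g\simeq T_{\beta(g)}$ yields two arcs $\alpha_0,\alpha_1$ in $T_g$ whose inner distance at radius $r$ is comparable to $r^{\beta(g)}$. Because $T_g\setminus Sing(X)$ is foliated by leaves of $\mathcal{F}$, a small perturbation allows $\alpha_0,\alpha_1$ to be replaced by leaves $\gamma_{x_0},\gamma_{x_1}$ without altering their order of contact; moreover, on such a pair the inner metric of $T_g$ is comparable to the ambient Euclidean metric, since each leaf is a smooth arc whose arclength is comparable to $\|\varphi_t(\cdot)\|$. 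Combined with the computation above, this forces $\beta(g)=a_{k(x_0,x_1)}/a_{j(x_0)}$, which is of the required form.

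The main obstacle is the last step: justifying rigorously that the two sides of the Hölder triangle piece $T_g$ can be taken to be leaves of $\mathcal{F}$ without altering $\beta(g)$, and that the inner metric of $T_g$ is bi-Lipschitz equivalent to the Euclidean metric on such a pair. This will rely on weighted-homogeneous invariance (so $T_g$ itself, up to bi-Lipschitz equivalence, is a $\varphi_t$-invariant union of leaves) together with the fact that no monotone reparametrisation of a leaf by its arclength can alter the Hölder contact exponent with a neighbouring leaf. Once this has been established, Theorem~\ref{2.5} follows immediately from the explicit contact computation above.
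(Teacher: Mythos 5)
Your overall strategy --- identify each $\beta(g)$ with an order of contact of two leaves of $\mathcal{F}_{(a_1,\dots,a_n)}$ lying in $X$, and compute that such contact orders are $1$ or directions of the Standard Newton Simplex --- is the right one, and your direct estimate for the contact of two leaves is correct (it reproves Theorem~\ref{theorem_section_3} by an explicit computation rather than by induction on $n$). The genuine gap is exactly where you flag it, and the repair you sketch would not work. The paper bridges the inner-metric exponent and the Euclidean contact orders by a separate result, Theorem~\ref{calculation_of_beta_exponent}: for a $\beta$-H\"older triangle, $\beta(X,x_0)$ equals the \emph{infimum over all pairs of semialgebraic arcs} of their \emph{outer} (Euclidean) contact order; its proof relies on the decomposition of \cite{B} into normally embedded H\"older triangles and on the isosceles property (Proposition~\ref{isosceles}). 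Your substitute fails on two counts. First, the piece $T_g$ need not be normally embedded, so the Euclidean distance between two arcs of $T_g$ at radius $r$ can vanish to a strictly higher order than their inner distance; the observation that each leaf separately has arclength comparable to $\|\varphi_t(\cdot)\|$ concerns one curve at a time and says nothing about paths \emph{inside the surface} joining one leaf to the other. Consequently, matching the Euclidean contact of (leaves near) the sides with a Newton direction can only give an upper bound for that contact order, not the value of $\beta(g)$. Second, even granting Theorem~\ref{calculation_of_beta_exponent}, the infimum ranges over all arcs in $T_g$, not only leaves; one must bound from below the contact order of an \emph{arbitrary} pair of arcs, which the paper does by lifting any arc through the map $T$ to the section $M=X_i\cap\{x_n=1\}$ and writing it as $\alpha(t)=(t^{a_1/a_n}x_1(t),\dots,t^{a_{n-1}/a_n}x_{n-1}(t),t)$ with bounded coefficients. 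Your "small perturbation" of $\alpha_0,\alpha_1$ to leaves is not justified: an arc of $X$ is in general transverse to the foliation and need not stay at bounded order from any single leaf.

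There is a second omission. When the section $M$ is contained in a coordinate hyperplane $\{x_k=r\}$, all leaves of $X_i$ share that coordinate constraint, the generic lower bound $a_{n-1}/a_n$ is not attained by leaves inside $X_i$, and $\beta(g)$ cannot be read off from the contact of two leaves of $X_i$ computed in $\R^n$. The paper handles this by projecting away the $k$-th coordinate --- a bi-Lipschitz map on $X_i$ whose image is an $(a_1,\dots,\hat{a}_k,\dots,a_n)$-weighted homogeneous surface in $\R^{n-1}$ --- and inducting on the ambient dimension; nothing in your argument plays this role. You also leave untreated the easy but necessary cases $X_i\subset\{x_n=0\}$ (handled by the same induction) and $X_i\cap\{x_n=0\}\neq\{0\}$ with $X_i\not\subset\{x_n=0\}$, where distinct unit tangent vectors at $0$ force $\beta(g)=1$.
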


For the case of weighted homogeneous surfaces in $\R^3$, we will
prove the following result.

\begin{theorem}\label{2.6}
Let $X\subset\R^3$ be a semialgebraic $(a_1,a_2,a_3)$-weighted
homogeneous surface. If $0\in X$ is an isolated singular point and
the local link of $X$ at $0$ is connected, then the germ of $X$ at
$0$ is bi-Lipschitz equivalent, with respect to the inner metric, to
a germ at $0$ of a $\beta$-horn, i.e. a surface defined as follows:
$$H_{\beta}=\{(x_1,x_2,y)\in\R^3 \ : \ (x_1^2+x_2^2)=y^{2\beta}\},$$
where $\beta$ is equal to 1 or to $\displaystyle\frac{a_2}{a_3}$.
\end{theorem}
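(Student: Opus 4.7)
First, I would pin down the combinatorial shape of the canonical H\"older complex $(\Gamma,\beta)$ of $X$ at $0$, combining Theorems~\ref{geometric_holder_complex} and~\ref{2.5}. Since $0$ is an isolated singular point on the closed semialgebraic surface $X$, the link $L$ of $X$ at $0$ is a smooth compact $1$-manifold; the connectedness hypothesis forces $L\cong S^1$, and via the cone structure in item~(1) of the definition of a geometric H\"older complex the underlying graph $\Gamma$ must also be a cycle. Every vertex of a cycle of length at least three has degree two with two distinct neighbours and so is artificial; the no-artificial-vertex condition therefore forces $\Gamma$ to be a bigon, two vertices joined by two parallel edges. Both vertices are then loop vertices, so condition~(2) of the canonical definition forces both edges to carry a single common exponent $\beta_0\ge 1$. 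Theorem~\ref{2.5} then restricts $\beta_0$ to the set $\{1,\ a_1/a_2,\ a_1/a_3,\ a_2/a_3\}$.

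Next, I would rule out the values $a_1/a_2$ and $a_1/a_3$. The exponent $\beta_0$ records the order of contact of a generic pair of nearby leaves of $\mathcal{F}_{(a_1,a_2,a_3)}$ meeting $L$, computed via
\[
\varphi_t(p)-\varphi_t(q)=\bigl(t^{a_1}(p_1-q_1),\,t^{a_2}(p_2-q_2),\,t^{a_3}(p_3-q_3)\bigr).
\]
A direct inspection of the dominant terms shows that a generic pair of points on an arc of $L$ produces contact order $a_1/a_3$ only when $p_2\equiv q_2$ and $p_3\equiv q_3$ along the arc, and contact order $a_1/a_2$ only when $p_3\equiv q_3\equiv 0$ and $p_2\equiv q_2$. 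Either condition would force the arc to lie along an affine line parallel to the $x_1$-axis, which is impossible since the smooth curve $L$, sitting on the round sphere $\partial B(0,\epsilon)$, meets every affine line in at most two points. Hence $\beta_0\in\{1,\ a_2/a_3\}$.

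Finally, the germ of the $\beta_0$-horn $H_{\beta_0}$ at $0$ evidently has exactly the bigon with constant exponent $\beta_0$ as its canonical H\"older complex, so the uniqueness in Theorem~\ref{geometric_holder_complex} concludes the proof by yielding the claimed inner-metric bi-Lipschitz equivalence between the germs $(X,0)$ and $(H_{\beta_0},0)$. The most delicate point will be justifying the uniformity of the exponent: I must verify that the finitely many transitional points of $L$ (for instance the points of $L\cap\{x_3=0\}$), where the pointwise contact order jumps to a larger value, do not introduce additional non-artificial vertices that would force $\Gamma$ to be a longer cycle.
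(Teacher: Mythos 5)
Your first step---the link is $S^1$, the canonical complex collapses to a single exponent $\beta_0$, and $\beta_0\in\{1,\,a_1/a_2,\,a_1/a_3,\,a_2/a_3\}$ by Theorem~\ref{2.5}---is sound and is essentially what the paper leaves implicit. The genuine gap is in your exclusion of $a_1/a_2$ and $a_1/a_3$. Your criterion for when two leaves have contact order $a_1/a_3$ is wrong: you compare $\varphi_t(p)$ with $\varphi_t(q)$ at the \emph{same} value of the foliation parameter $t$, but the order of contact is defined after reparametrizing each arc by its distance to the origin (equivalently, by Proposition~\ref{comparison1}, by a common section such as $\{x_3=t\}$). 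Two distinct points on the same leaf have $p_3\neq q_3$ in general and yet the leaves coincide. The correct condition, as in the proof of Theorem~\ref{theorem_section_3}, is that the \emph{normalized} second coordinates agree, i.e. $p_2/p_3^{a_2/a_3}=q_2/q_3^{a_2/a_3}$; equivalently, that the section $M=X\cap\{x_3=1\}$ is contained in a line $\{x_2=r\}$. This does not force an arc of $L$ to lie on an affine line parallel to the $x_1$-axis: it forces $L$ to lie on the curved sheet $\{x_2=r\,x_3^{a_2/a_3}\}$, which a curve on the sphere can perfectly well do. So the "a line meets a sphere in at most two points" contradiction evaporates. A telling symptom is that your exclusion step never invokes the connectedness of the link, which is the hypothesis that actually does the work here.

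The paper's route is: if $\beta_0=a_1/a_3$ then $M\subset\{x_2=r\}$; since the link is a connected compact $1$-manifold without boundary, it cannot be the link of the union of leaves over a proper bounded subinterval of that line, so $M$ must be the entire line $\{x_2=r,\,x_3=1\}$; but then the closure of the union of leaves through this line contains the whole $x_1$-axis, contradicting $X\cap\{x_3=0\}=\{0\}$ (which must hold, since otherwise $\beta_0=1$ already, by the tangent-vector argument in the proof of Theorem~\ref{2.5}). The exponent $a_1/a_2$ is excluded similarly: by the computation in Theorem~\ref{theorem_section_3} it can only arise as a contact order of leaves lying inside $\{x_3=0\}$, so realizing it as the infimum would force $X\subset\{x_3=0\}$, where a connected circle link makes $X$ a full neighbourhood of $0$ in the plane and hence $\beta_0=1$. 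You should replace your sphere-and-line argument by this connectedness-plus-closedness argument.
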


\section{Order contact of semialgebraic arcs. Weighted homogeneous foliations}

Recall that a semialgebraic arc $\gamma$ at a point $x_0\in\R^n$ is
image of a semiagebraic map
$\bar{\gamma}\colon[0,\epsilon)\rightarrow\R^n$ such that
$\bar{\gamma}(0)=x_0$ and $\bar{\gamma }(s)\neq 0$, for $s\neq 0$.
Let $\gamma_1,\gamma_2$ be two semialgebraic arcs at $x_0$. These
arcs can be reparametrized near $x_0$ in the following form:
$$\gamma_i(t)=\{x\in \gamma_i \ : \ \|x-x_0\|=t\}; \ i=1,2.$$
Let $\rho(t)=\|\gamma_1(t)-\gamma_2(t)\|$. Since $\rho$ is a
semialgebraic function, we have
$$\rho(t)=at^{\lambda}+o(t^{\lambda}),$$ where $\lambda$ is a
rational number bigger or equal to 1 and $a>0$. The number $\lambda$
is called the \emph{order of contact} of $\gamma_1$ and $\gamma_2$.
We use the notation $\lambda(\gamma_1,\gamma_2)$.

Set
$$\widetilde{\gamma}_i(t)=\{x\in \gamma_i \ : \ \|x-x_0\|_{max}=t\};
\ i=1,2.$$ Let $\widetilde{\rho}(t)=\|
\widetilde{\gamma}_1(t)-\widetilde{\gamma}_2(t)\|_{max}$. Recall
that $\|x\|_{max}=max\{|x_1|,\dots,|x_n|\}$. Since
$\widetilde{\rho}$ is also a semialgebraic function, we have
$$\widetilde{\rho}(t)=\widetilde{a}t^{\widetilde{\lambda}}+
o(t^{\widetilde{\lambda}}),$$ where $\tilde{\lambda}$ is a rational
number bigger or equal to 1 and $\tilde{a}>0$.

\begin{proposition}\label{comparison1}

The number $\widetilde{\lambda}$, defined above, is equal to
$\lambda(\gamma_1,\gamma_2)$.
\end{proposition}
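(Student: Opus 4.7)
My plan is to exploit the bi-Lipschitz equivalence of norms $\|x\|_{max}\le\|x\|\le\sqrt{n}\,\|x\|_{max}$, comparing $\rho$ and $\widetilde\rho$ via a pair of symmetric triangle-inequality estimates, while carefully accounting for the reparametrization between $\gamma_i$ and $\widetilde\gamma_i$. Parametrize each $\gamma_i$ by Euclidean arclength; then $\widetilde\gamma_i(t)=\gamma_i(\phi_i(t))$, where $\phi_i$ is the inverse of the semialgebraic function $\psi_i(s):=\|\gamma_i(s)-x_0\|_{max}$. Since $\gamma_i(s)=x_0+sv_i+o(s)$ for some unit tangent vector $v_i$, the functions $\phi_i,\psi_i$ are semialgebraic bi-Lipschitz homeomorphisms near $0$, with $\phi_i(t)=t/\|v_i\|_{max}+o(t)$.

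The argument rests on two dual estimates that are immediate from the (reverse) triangle inequality once one notes the identities $\phi_i(t)=\|\widetilde\gamma_i(t)-x_0\|$ and $\psi_i(s)=\|\gamma_i(s)-x_0\|_{max}$:
\[
|\phi_1(t)-\phi_2(t)|\le\sqrt{n}\,\widetilde\rho(t)\qquad\text{and}\qquad|\psi_1(s)-\psi_2(s)|\le\rho(s).
\]
Combined with the bi-Lipschitz character of $\phi_2$ and $\psi_2$, these yield the derived bounds $|\phi_1(t)-\phi_2(t)|=O(\rho(\phi_1(t)))$ and $|\psi_1(s)-\psi_2(s)|=O(\widetilde\rho(\psi_1(s)))$.

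For the inequality $\widetilde\lambda\ge\lambda$, I split
\[
\widetilde\rho(t)\le\|\gamma_1(\phi_1(t))-\gamma_2(\phi_1(t))\|+\|\gamma_2(\phi_1(t))-\gamma_2(\phi_2(t))\|,
\]
bounding the first summand by $\rho(\phi_1(t))=O(t^\lambda)$ and the second, via the $1$-Lipschitz property of the arclength parametrization, by $|\phi_1(t)-\phi_2(t)|=O(\rho(\phi_1(t)))=O(t^\lambda)$. The opposite inequality $\lambda\ge\widetilde\lambda$ follows by the completely symmetric argument, bounding $\rho(s)\le\sqrt{n}\,\|\widetilde\gamma_1(\psi_1(s))-\widetilde\gamma_2(\psi_2(s))\|_{max}$ in exactly the same way.

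The main subtlety—and the reason the two dual estimates must be established separately—is avoiding circularity: the $\rho$-controlled bound on $|\psi_1-\psi_2|$ is what one needs when upper-bounding $\widetilde\rho$, and conversely the $\widetilde\rho$-controlled bound on $|\phi_1-\phi_2|$ is what one needs when upper-bounding $\rho$. Once this bookkeeping is in place, each of the two inequalities closes in a single line.
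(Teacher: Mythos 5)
Your proof is correct, but it follows a genuinely different route from the paper's. The paper does not prove Proposition \ref{comparison1} directly: it defers it to Section \ref{final_section}, where it is obtained as a special case of Theorem \ref{final_theorem}, valid for an arbitrary semialgebraic norm $\|\cdot\|_S$. The paper's argument there is geometric and proceeds by contradiction: it first establishes a uniform angle bound for chords of a compact convex semialgebraic set (Lemma \ref{final_lemma}, via compactness of the tangent-line variety $Tang(M)$), and then shows that if $\lambda_S\neq\lambda$ the angle at the vertex $\gamma_2^S(\tau)$ of the triangle $\gamma_1(t),\gamma_2(t),\gamma_2^S(\tau)$ would degenerate, contradicting that bound applied to the convex $S$-ball. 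Your argument instead is a direct two-sided estimate: the reverse triangle inequality together with the equivalence $\|x\|_{max}\le\|x\|\le\sqrt{n}\,\|x\|_{max}$ controls the discrepancy $|\phi_1-\phi_2|$ (resp.\ $|\psi_1-\psi_2|$) between the two reparametrizations by $\widetilde\rho$ (resp.\ $\rho$), and a single triangle-inequality split then closes each inequality; I checked the bookkeeping ($|\phi_1(t)-\phi_2(t)|=|\phi_2(\psi_2(s))-\phi_2(\psi_1(s))|\le \mathrm{Lip}(\phi_2)\,\rho(s)$ with $s=\phi_1(t)$, etc.) and it is sound. Your version is more elementary and self-contained (no convexity lemma, no compactness of a tangent variety), and in fact extends verbatim to any norm on $\R^n$ since all are equivalent; the paper's version is tailored to make the statement for general semialgebraic norms with the convex unit ball playing the central role. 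One small imprecision you should fix: you announce a parametrization by Euclidean \emph{arclength}, but the identity $\phi_i(t)=\|\widetilde\gamma_i(t)-x_0\|$ you rely on is exact only for the parametrization by Euclidean \emph{distance to $x_0$}, which is also how $\rho$ is defined in the paper; with that convention the arc $\gamma_2$ is no longer literally $1$-Lipschitz in its parameter, but for a semialgebraic arc it is $(1+o(1))$-Lipschitz, which is all your second summand needs.
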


We are going to prove this proposition in Section
\ref{final_section}.

\begin{proposition}\cite{BF}\label{isosceles}

Let $\gamma_1,\gamma_2$ and $\gamma_3$ be three semialgebraic arcs
at $x_0\in\R^n$. Let $\lambda(\gamma_1,\gamma_2)\geq
\lambda(\gamma_2,\gamma_3)\geq\lambda(\gamma_1,\gamma_3)$, then
$\lambda(\gamma_2,\gamma_3)=\lambda(\gamma_1,\gamma_3)$.
\end{proposition}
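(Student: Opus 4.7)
The plan is to reduce the claim to the ultrametric-type inequality
$$\lambda(\gamma_1,\gamma_3) \geq \min\{\lambda(\gamma_1,\gamma_2),\,\lambda(\gamma_2,\gamma_3)\}.$$
Under the hypothesis $\lambda(\gamma_1,\gamma_2)\geq\lambda(\gamma_2,\gamma_3)\geq\lambda(\gamma_1,\gamma_3)$ the minimum on the right equals $\lambda(\gamma_2,\gamma_3)$, so this inequality forces $\lambda(\gamma_1,\gamma_3)\geq\lambda(\gamma_2,\gamma_3)$; combined with the hypothesis it yields the desired equality. Thus the real work is to establish the ultrametric property for orders of contact.

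To prove it, I would first reparametrize each $\gamma_i$ by distance to $x_0$ exactly as in the definition preceding the proposition, so that for small $t>0$ the three points $\gamma_1(t),\gamma_2(t),\gamma_3(t)$ all lie on the sphere of radius $t$ centered at $x_0$. Setting $\lambda_{ij}:=\lambda(\gamma_i,\gamma_j)$ and $\rho_{ij}(t):=\|\gamma_i(t)-\gamma_j(t)\|$, each $\rho_{ij}$ is a semialgebraic function of $t$ and hence admits a Puiseux-type expansion
$$\rho_{ij}(t) = a_{ij}\,t^{\lambda_{ij}} + o(t^{\lambda_{ij}}), \qquad a_{ij}>0.$$

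Next I would apply the Euclidean triangle inequality at parameter $t$:
$$\rho_{13}(t) \leq \rho_{12}(t) + \rho_{23}(t).$$
Because $\lambda_{12}\geq\lambda_{23}$, the expansion of $\rho_{12}$ satisfies $\rho_{12}(t)=O(t^{\lambda_{23}})$, so the right-hand side is bounded by $C\,t^{\lambda_{23}}$ for some $C>0$ and small $t$. Suppose, toward contradiction, that $\lambda_{13}<\lambda_{23}$. Substituting the expansion of $\rho_{13}$ and dividing by $t^{\lambda_{13}}$, one gets $a_{13}+o(1)\leq C\,t^{\lambda_{23}-\lambda_{13}}+o(t^{\lambda_{23}-\lambda_{13}})$; letting $t\to 0^+$ forces $a_{13}\leq 0$, contradicting $a_{13}>0$. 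Hence $\lambda_{13}\geq\lambda_{23}$, and the ultrametric inequality is established.

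The main (minor) obstacle is bookkeeping: one must confirm that the distance-to-$x_0$ reparametrization is a semialgebraic change of variable so that the Puiseux expansions of the $\rho_{ij}$ are legitimate, and verify that the $o$-terms do not interfere when one dominates the sum $\rho_{12}+\rho_{23}$ by its leading $t^{\lambda_{23}}$ behaviour. Both steps are routine consequences of semialgebraicity and of the elementary fact that $t^{\lambda_{12}}=o(t^{\lambda_{23}})$ whenever $\lambda_{12}>\lambda_{23}$.
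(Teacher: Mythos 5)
Your argument is correct: the reduction to the non-archimedean inequality $\lambda(\gamma_1,\gamma_3)\geq\min\{\lambda(\gamma_1,\gamma_2),\lambda(\gamma_2,\gamma_3)\}$, proved by applying the Euclidean triangle inequality to the three points $\gamma_1(t),\gamma_2(t),\gamma_3(t)$ on the sphere of radius $t$ and comparing leading Puiseux exponents, is exactly the standard proof of this statement. The paper itself gives no proof, quoting the result from the reference [BF], and the argument there is essentially the one you wrote; the only detail worth making explicit is that the coefficients $a_{ij}$ are positive because the arcs are assumed pairwise distinct as germs (otherwise the corresponding order of contact is $+\infty$ and the claim is trivial).
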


The main result of this section is the following.

\begin{theorem}\label{theorem_section_3} Let $\mathcal{F}_{(a_1,\dots,a_n)}$ be a weighted
homogeneous foliation in $\R^n$. For all $x\neq y$ in $\R^n$, the
order contact $\lambda(\gamma_x,\gamma_y)$ is equal to 1 or to a
direction of a 1-dimensional face of the Standard Newton Simplex
associated to $\mathcal{F}_{(a_1,\dots,a_n)}$.
\end{theorem}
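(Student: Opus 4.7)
\emph{Proof plan.} By Proposition \ref{comparison1} it suffices to compute $\widetilde\lambda$, the contact order with respect to the max norm. The approach is to parametrize each leaf by its max-norm distance from $0$ and then read off the leading exponent of $t$ in the coordinates of the difference $\widetilde\gamma_x(t)-\widetilde\gamma_y(t)$.

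For any nonzero $x\in\R^n$ set $i_x=\max\{i:x_i\neq 0\}$, so that $a_{i_x}=\min\{a_i:x_i\neq 0\}$. Since $\gamma_x(s)=(s^{a_1}x_1,\ldots,s^{a_n}x_n)$, for $s$ small one has $\|\gamma_x(s)\|_{\max}=s^{a_{i_x}}|x_{i_x}|$ exactly; inverting this relation yields
\[
\widetilde\gamma_x(t)_i \;=\; x_i\,|x_{i_x}|^{-a_i/a_{i_x}}\,t^{a_i/a_{i_x}},\qquad 1\leq i\leq n,
\]
and analogously for $\gamma_y$. A key a priori observation is that every exponent of $t$ appearing with nonzero coefficient is either $a_i/a_{i_x}$ with $i\leq i_x$ or $a_i/a_{i_y}$ with $i\leq i_y$; such an exponent equals $1$ precisely when $i=i_x$ or $i=i_y$, and otherwise is a ratio $a_i/a_j$ with $i<j$, i.e.\ a direction of a $1$-dimensional face of the Standard Newton Simplex. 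Thus any leading exponent of $\widetilde\rho(t)$ automatically lies in the allowed list, and the problem reduces to showing that some leading coefficient does not vanish.

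I would then split into cases. If $i_x\neq i_y$, say $i_x<i_y$, then the $i_y$-coordinate of the difference equals $-\mathrm{sgn}(y_{i_y})\,t$, because $x_{i_y}=0$, so $\widetilde\lambda=1$. If $i_x=i_y=:i_0$, the $i_0$-coordinate of the difference has coefficient $\mathrm{sgn}(x_{i_0})-\mathrm{sgn}(y_{i_0})$ at $t^1$: when the signs disagree this is $\pm 2$ and $\widetilde\lambda=1$; when they agree, the $t^1$ contribution cancels and the leading exponent becomes $a_{i^*}/a_{i_0}$, where $i^*$ is the largest index $i<i_0$ with $x_i|x_{i_0}|^{-a_i/a_{i_0}}\neq y_i|y_{i_0}|^{-a_i/a_{i_0}}$, a direction of a face of the Newton simplex.

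The main obstacle is the residual matched-signs subcase in which no such $i^*$ exists, i.e.\ every coefficient $x_i|x_{i_0}|^{-a_i/a_{i_0}}-y_i|y_{i_0}|^{-a_i/a_{i_0}}$ with $i\leq i_0$ vanishes. I would dispatch it by checking that this system of equalities forces $y=\varphi_\lambda(x)$ with $\lambda=(y_{i_0}/x_{i_0})^{1/a_{i_0}}>0$, so that $\gamma_x=\gamma_y$; this degenerate situation is excluded since $\lambda(\gamma_x,\gamma_y)$ is defined only for distinct arcs, completing the case analysis and the proof.
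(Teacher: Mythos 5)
Your argument is correct in substance but organized quite differently from the paper's. The paper proceeds by induction on the ambient dimension $n$: curves lying in the hyperplane $x_n=0$ are handled by the induction hypothesis, curves separated by that hyperplane (or with exactly one of them inside it) are dispatched by a distinct-tangent-vector argument giving $\lambda=1$, and the remaining case is computed by parametrizing both leaves by the last coordinate $x_n=t$ and reading off $\widetilde\rho(t)=\max_i t^{a_i/a_n}|\bar y_i-\bar z_i|$. You instead give a single non-inductive computation, attaching to each leaf the index $i_x$ of its last nonzero coordinate and normalizing by the max norm directly; your ``a priori observation'' that every exponent occurring in a coordinate of the difference is automatically $1$ or a face direction, so that only the total vanishing of coefficients needs to be excluded, is a clean way to package the whole case analysis, and your explicit treatment of the degenerate case $\gamma_x=\gamma_y$ is actually more careful than the paper, which tacitly assumes the two leaves are distinct. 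Both proofs lean on Proposition \ref{comparison1} to pass to the max norm. One inaccuracy you should repair: when the weights are not pairwise distinct, the identity $\|\gamma_x(s)\|_{\max}=s^{a_{i_x}}|x_{i_x}|$ can fail --- e.g.\ for $a=(2,1,1)$ and $x=(0,3,1)$ one gets $\|\gamma_x(s)\|_{\max}=3s$, not $s$. The correct normalizing constant is $c_x=\max\{|x_i| : a_i=a_{i_x}\}$, which changes your coefficients to $x_i\,c_x^{-a_i/a_{i_x}}$ but leaves every exponent $a_i/a_{i_x}$ untouched; the nonvanishing of the decisive coefficients in your two main cases survives this correction (in the matched-signs subcase the $t^1$ term need no longer cancel, but an exponent equal to $1$ is in the allowed list anyway), and the degenerate-case computation still yields $y=\varphi_s(x)$ with $s=(c_y/c_x)^{1/a_{i_0}}$. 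With that adjustment the proof is complete.
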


\begin{proof} Let us proceed by induction on $n$. First, we consider
the case $n=2$. In this case, all the leaves of this foliation can
be presented in one of the following forms:
\begin{enumerate}

\item $x_1\geq 0$, $x_2=ax_1^{\alpha}$, where $a\in\R$ and
$\displaystyle\alpha=\frac{a_1}{a_2}$;
\item $x_1\leq 0$, $x_2=a|x_1|^{\alpha}$, where $a\in\R$ and
$\displaystyle\alpha=\frac{a_1}{a_2}$;
\item $x_1=0$, $x_2\leq 0$;
\item $x_1=0$, $x_2\geq 0$.
\end{enumerate}

Using Proposition \ref{comparison1} one can show that
$\lambda(\gamma_x,\gamma_y)$ is equal to 1 or to $\alpha$.

Suppose that the statement is true for all the weighted homogeneous
foliations $\mathcal{F}_{(a_1,\dots,a_k)}$ in $\R^k$  for $k<n$.
Consider a foliation $\mathcal{F}_{(a_1,\dots,a_n)}$. Note that the
restriction of $\mathcal{F}_{(a_1,\dots,a_n)}$ to the hyperplane
$x_n=0$ is a weighted homogeneous foliation
$\mathcal{F}_{(a_1,\dots,a_{n-1})}$ in $\R^{n-1}$. Thus, for any two
curves $\gamma_y, \gamma_z$ belongs to the hyperplane, the statement
is true, by the induction hypotheses. Thus, we can suppose that
$\gamma_y, \gamma_z$ are chosen in such a way that
$z=(z_1,\dots,z_n)$ and $z_n\neq 0$. Let $y=(y_1,\dots,y_n)$. If
$y_n=0$, then the unit tangent vector at zero to $\gamma_y$ belong
to the hyperplane $x_n=0$ and the unit tangent vector to $\gamma_z$
does not belong to this hyperplane. Thus,
$\lambda(\gamma_y,\gamma_z)=1$. If $y$ and $z$ belong to different
sides of the hyperplane $x_n=0$, then their unit tangent vectors at
zero cannot coincide. Again, in this case,
$\lambda(\gamma_y,\gamma_z)=1$. Now, we suppose that $z_n>0$ and
$y_n>0$ (the case $z_n<0$ and $y_n<0$ can be treated in the same
way). Consider the parametrization $\bar{\gamma}_y(t)$ of $\gamma_y$
and $\bar{\gamma}_z(t)$ of $\gamma_z$ defined in the beginning of
this section. We have
$$\bar{\gamma}_y(t)=(t^{\frac{a_1}{a_n}}\bar{y}_1,
\dots,t^{\frac{a_{n-1}}{a_n}}\bar{y}_{n-1},t)$$ and
$$\bar{\gamma}_z(t)=(t^{\frac{a_1}{a_n}}\bar{z}_1,
\dots,t^{\frac{a_{n-1}}{a_n}}\bar{z}_{n-1},t)$$ where
$\bar{y}=(\bar{y}_1,\dots,\bar{y}_{n-1},1)$ and
$\bar{z}=(\bar{z}_1,\dots,\bar{z}_{n-1},1)$ are the intersections of
$\gamma_y$ and $\gamma_z$, respectively, with the hyperplane
$x_n=1$. We obtain
$$\widetilde{\rho}(t)=max\{t^{\frac{a_1}{a_n}}|\bar{y}_1-\bar{z}_1|,
\dots,t^{\frac{a_{n-1}}{a_n}}|\bar{y}_{n-1}-\bar{z}_{n-1}| \}.$$
Hence, $\lambda(\gamma_y,\gamma_z)$ can be equal to
$\displaystyle\frac{a_1}{a_n},\dots,\frac{a_{n-1}}{a_n}$. The
theorem is proved.
\end{proof}

\begin{remark} By the construction, it is clear that, for all the
pairs ($i>j$), there exists a pair of curves $\gamma_y$ and
$\gamma_z$ such that
$\displaystyle\lambda(\gamma_y,\gamma_z)=\frac{a_i}{a_j}$.
\end{remark}

\section{H\"older exponents. Horn exponents.}

A semialgebraic surface $X\subset\R^n$ is called a
\emph{$\beta$-H\"older Triangle} at $x_0\in X$ if the germ of $X$ at
$x_0$ is semialgebraically bi-Lipschitz equivalent to a germ of the
standard $\beta$-H\"older Triangle $T_{\beta}\subset\R^2$, with
respect to the inner metric, and the image of the point $x_0$ by the
corresponding bi-Lipschitz map is the point $(0,0)\in\R^2$. The
inverse images of the boundary curves of $T_{\beta}$, containing
$(0,0)$ are called \emph{sides} of the $\beta$-H\"older Triangle
$X$. The number $\beta$ is called the \emph{H\"older Exponent} of
$X$ at $x_0$. We use the notation $\beta(X,x_0)$.

A semialgebraic surface $X\subset\R^n$ is called a
\emph{$\beta$-Horn} at a point $x_0\in X$ if the germ of $X$ at
$x_0$ is semialgebraically bi-Lipschitz equivalent to the germ at
$(0,0,0)\in\R^3$ of the standard $\beta$-Horn, i.e. a semialgebraic
set defined as follows:
$$H_{\beta}=\{(x_1,x_2,y)\in\R^3 \ : \ (x_1^2+x_2^2)=y^{2\beta}\},$$
with respect to the inner metric, and the image of the point $x_0$
by the corresponding bi-Lipschitz map is the point $(0,0,0)\in\R^3$.
The number $\beta$ is called the \emph{Horn Exponent} of $X$ at
$x_0$. We are going to use the same notation $\beta(X,x_0)$.

The following result is useful for calculations of H\"older
Exponents and Horn Exponents.

\begin{theorem}\label{calculation_of_beta_exponent} Let
$X\subset\R^n$ be a semialgebraic surface. Let $x_0\in X$ be a point
such that $X$ is a $\beta$-H\"older Triangle at $x_0$ or a
$\beta$-Horn at $x_0$. Then $\beta(X,x_0)=\inf
\{\lambda(\gamma_1,\gamma_2) \ : \ \gamma_1,\gamma_2 \mbox{ are
semialgebraic arcs on X with } \gamma_1(0)=\gamma_2(0)=x_0\}$.
\end{theorem}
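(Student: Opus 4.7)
The plan is to reduce the equality to the two standard models $T_\beta\subset\R^2$ and $H_\beta\subset\R^3$, compute the infimum of $\lambda$ on each by hand using the max-norm, and then transfer the conclusion to $X$ via the hypothesised semialgebraic bi-Lipschitz equivalence.

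For the model $T_\beta$, Proposition~\ref{comparison1} lets me work with the max-norm. Since every small point $(x,y)\in T_\beta$ satisfies $\max(|x|,|y|)=x$, each semialgebraic arc at $0$ admits a max-norm parametrization $\widetilde\gamma(t)=(t,f(t))$ with $f$ semialgebraic and $0\le f(t)\le t^\beta$. The max-norm distance between two such arcs is $|f_1(t)-f_2(t)|\le t^\beta$, giving $\lambda(\gamma_1,\gamma_2)\ge\beta$, with equality realized by the boundary arcs $f_1\equiv 0$ and $f_2(t)=t^\beta$; hence $\inf\lambda=\beta$. An analogous computation on $H_\beta$, using the parametrization $\widetilde\gamma(t)=(t^\beta\cos\theta(t),t^\beta\sin\theta(t),t)$, shows that pairs of arcs with distinct angular limits at $0$ realize $\lambda=\beta$ while coincident angular limits yield $\lambda>\beta$; once again $\inf\lambda=\beta$.

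For the transfer to the general surface $X$, let $\Phi\colon(X,x_0)\to(Y,0)$ be the inner semialgebraic bi-Lipschitz equivalence with $Y\in\{T_\beta,H_\beta\}$. The inequality $\|\cdot\|_{\R^n}\le d_X(\cdot,\cdot)$, combined with the inner bi-Lipschitz bound from $\Phi$ and the model estimate above, gives $\|\gamma_1(t)-\gamma_2(t)\|\le C\,t^\beta$ for every pair of semialgebraic arcs in $X$, hence $\inf_X\lambda\ge\beta$. For the reverse inequality I would take as candidate arcs the $\Phi$-preimages of the extremal arcs produced in the model, and argue that their outer Euclidean distance in $\R^n$ genuinely realizes the order $t^\beta$.

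The main obstacle is this last step. While the lower bound follows easily from outer $\le$ inner, the reverse requires that the two preferred arcs are not outer-closer than they are inner, i.e.\ that along them the Euclidean and inner metrics remain comparable. Since $\Phi$ controls only the inner metric, an additional semialgebraic argument is needed. The natural tool is a Pancake-type decomposition of $X$ compatible with $\Phi$ which ensures outer and inner distances are of the same order on each piece, so that the extremal pair of model arcs pulls back to a pair in $X$ realising $\lambda=\beta$, closing the proof.
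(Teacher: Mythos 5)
Your computations on the models $T_\beta$ and $H_\beta$ are correct, and you have located the critical difficulty honestly, but the step you leave open is not a technicality that a ``compatible'' pancake decomposition will automatically repair: as formulated, your plan for the upper bound fails, it is not merely incomplete. The $\Phi$-preimages of the extremal model arcs need not realize $\lambda=\beta$, because $\Phi$ only sees the inner metric while $\lambda$ is an outer quantity. Concretely, let $X\subset\R^3$ be the union of $\{0\le y\le x,\ z=0\}$ and $\{0\le y\le x,\ z=x^{9}(x-y)\}$, glued along $\{y=x,\ z=0\}$. Unfolding along the common edge shows $X$ is an inner $1$-H\"older triangle whose sides (the preimages of the two boundary arcs of $T_1$) are $\{y=z=0\}$ and $\{y=0,\ z=x^{10}\}$; this pair has \emph{outer} order of contact $10$, while the infimum $\beta=1$ is attained by a different pair, e.g.\ $\{y=z=0\}$ and $\{y=x,\ z=0\}$. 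So the extremal pair must be produced inside $X$ by decomposing $X$ itself into \emph{normally embedded} pieces, not by pulling arcs back from the model. Your lower bound also has a (more repairable) gap: $d_Y(\Phi\gamma_1(t),\Phi\gamma_2(t))$ need not be $O(t^\beta)$, since the points $\Phi\gamma_i(t)$ may sit at distances $c_1t\neq c_2t$ from the vertex, forcing their inner distance in the model to be of order $t$; curing this requires the reparametrization-invariance of the contact order, i.e.\ Proposition~\ref{comparison1} together with the non-archimedean property of Proposition~\ref{isosceles}.

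The paper's proof supplies exactly the structure you are missing, and runs in the opposite direction to yours. It starts from the decomposition theorem of \cite{B} (see also \cite{Ku}): $X$ is a finite union of \emph{normally embedded} $\beta_i$-H\"older triangles $X_i$ with consecutive sides $\gamma_i,\gamma_{i+1}$ satisfying $\beta_i=\lambda(\gamma_i,\gamma_{i+1})$, and the simplification theorem of \cite{B} gives $\beta(X,x_0)=\min_i\beta_i$. Normal embedding makes outer and inner contact orders agree on each piece, so the sides of a minimizing piece realize the infimum (your ``extremal pair''); and for arbitrary arcs $\alpha_1\subset X_{j_1}$, $\alpha_2\subset X_{j_2}$, Proposition~\ref{isosceles} expresses $\lambda(\alpha_1,\alpha_2)$ as the minimum of a chain of contact orders, each at least the corresponding $\beta_j$, whence $\lambda(\alpha_1,\alpha_2)\ge\beta(X,x_0)$. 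If you want to keep your outline, replace ``transfer from the model via $\Phi$'' by ``apply the model computation piecewise on a normally embedded decomposition of $X$''; at that point the bi-Lipschitz transfer, and the obstacle it creates, both disappear.
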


\begin{proof}
We are going to prove the statement for a $\beta$-H\"older Triangle.
The proof for a $\beta$-Horn is the same. Let $X\subset\R^n$ be a
$\beta$-H\"older Triangle at $x_0\in X$. By the main result of
\cite{B} (see also \cite{Ku}), there exists a finite set of
semialgebraic arcs at $x_0$, $\{\gamma_1,\dots,\gamma_k\}$,
$\gamma_i\subset X$ for all $i=1,\dots k$, such that

\begin{enumerate}
\item $\gamma_i,\gamma_{i+1}$ are sides of a $\beta_i$-H\"older
Triangle $X_i\subset X$, where
$\beta_i=\lambda(\gamma_i,\gamma_{i+1})$;
\item $\gamma_j\cap X_i=x_0$ if $j\neq i$ and $j\neq i+1$;
\item $X_i\cap B(x_0,\epsilon)$ is normally embedded in $\R^n$, for
sufficiently small $\epsilon >0$.
\end{enumerate}

By the simplification theorem of \cite{B}, $\beta(X,x_0)=\min
\beta_i$.

Let $\alpha_1,\alpha_2\subset X$ be two semialgebraic arcs at $x_0$.
Since $\alpha_1$ and $\alpha_2$ are semialgebraic, then there exist
two subsets $X_{j_1}$ and $X_{j_2}$, defined above, such that
$\alpha_1\subset X_{j_1}$ and $\alpha_2\subset X_{j_2}$. We can
suppose that $j_1<j_2$. By Proposition \ref{isosceles}, we obtain
$$\lambda(\alpha_1,\alpha_2)=\min\{\lambda(\alpha_1,\gamma_{j_1+1}),
\lambda(\gamma_{j_1+1,\gamma_{j_1+2}}),\dots,\lambda(\gamma_{j_2},\alpha_2)\}.$$
By the same reason,
$$\lambda(\alpha_1,\gamma_{j_1+1})\geq \beta_{j_1}
\mbox{ and } \lambda(\gamma_{j_2},\alpha_2)\geq \beta_{j_2}.$$ By
these three inequalities, we obtain $\lambda(\alpha_1,\alpha_2)\geq
\beta(X,x_0)$.

On the other hand, there exists a pair $\gamma_i,\gamma_{i+1}$, such
that $\beta(X,x_0)=\beta_i=\lambda(\gamma_i,\gamma_{i+1})$.
\end{proof}

\section{ Canonical H\"older Complex for weighted homogeneous surfaces.}

This section is devoted to a proof of Theorem \ref{2.5}. We use
induction on the dimension of the ambient space $\R^n$.

Let $X\subset\R^2$ be a closed semialgebraic surface which is
$(a_1,a_2)$-weighted homogeneous. If $X\neq\R^2$, then $X$ is a
collection of some H\"older Triangles $X_1,\dots,X_p$ such that
$X_i\cap X_j=\{0\}$ if $i\neq j$. By Proposition \ref{sing(X)}, we
have two possibilities:
\begin{enumerate}
\item the boundary curves of $X_i$ belong to $Sing(X)$ and, thus, the
boundary curves $\gamma_1$ and $\gamma_2$ are leaves of the weighted
homogeneous foliation;
\item $X_i$ is a " half " of a weighted homogeneous $\beta$-Horn, in
this case we can also suppose that the boundary curves of $X_i$ are
leaves of this foliation.
\end{enumerate}

Thus, $X_i$ is a $(a_1,a_2)$-weighted homogeneous set. If a
$\beta$-H\"older Triangle $X_i$ intersects with the set $x_2=0$ only
at $\{0\}$, then $\beta(X_i,0)=\frac{a_1}{a_2}$. Otherwise,
$\beta(X_i,0)=1.$ Let us observe that $\beta(\R^2,0)=1$. The first
step of induction is done.

Let $X\subset\R^n$ be a semialgebraic $(a_1,\dots,a_n)$-weighted
homogeneous surface. Let $X_i$ be a $\beta$-H\"older Triangle
corresponding to the Canonical Complex of $X$ at $0$. If $X_i$
belongs to the hyperplane $x_n=0$, then the statement is true, by
the induction hypothesis. Thus, let us suppose that $X_i$ does not
belong to the hyperplane $x_n=0$, i.e. there exists a curve
$\gamma_z\subset X_i$ such that $\gamma_z\cap\{x_n=0\}=\{0\}$. Now,
if $X_i\cap\{x_n=0\}\neq\{0\}$, then there exists a curve
$\gamma_y\subset X_i\cap\{x_n=0\}$. The curves $\gamma_z$ and
$\gamma_y$ have different unit tangent vectors at $0\in\R^n$ and,
thus, $\beta(X_i,0)=1$. Now we consider the case when
$X_i\cap\{x_n=0\}=\{0\}$. We are going to show that, for any pair of
semialgebraic arcs $\alpha_1,\alpha_2\subset X_i$ with same initial
point $0\in\R^n$, there exists a pair of leaves $\gamma_{z_1}$ and
$\gamma_{z_2}$ such that
$\lambda(\alpha_1,\alpha_2)\geq\lambda(\gamma_{z_1},\gamma_{z_2})$.
In order to prove this statement, we need the following lemma.

\begin{lemma}\label{final_lemma} Let $Y\subset\R^n$ be a
$(a_1,\dots,a_n)$ weighted homogeneous surface, such that $Y-\{0\}$
is connected. Let $Y\cap\{x_n=0\}=\{0\}$. Suppose that the section
$Y\cap\{x_n=1\}$ is contained in the plane $\{x_i=r\}$. Then, for
every positive value $\epsilon$, there exists a value $r(\epsilon)$
such that the section $Y\cap\{x_n=\epsilon\}$ is contained  to the
plane $x_i=r(\epsilon)$.
\end{lemma}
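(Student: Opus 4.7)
The plan is to exploit the one-parameter family of scaling maps that realizes the weighted homogeneity. Recall from the proof of Proposition \ref{sing(X)} the maps
$$\varphi_t(x_1,\dots,x_n) = (t^{a_1}x_1,\dots,t^{a_n}x_n), \qquad t > 0,$$
which satisfy $\varphi_t(Y) = Y$ because $Y$ is $(a_1,\dots,a_n)$-weighted homogeneous. Since $\varphi_t\circ\varphi_s = \varphi_{ts}$, each $\varphi_t$ is a bijection of $\R^n$ onto itself, hence restricts to a bijection of $Y$ onto itself.

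The key observation is the very transparent way $\varphi_t$ transports affine coordinate hyperplanes: it sends $\{x_n = 1\}$ to $\{x_n = t^{a_n}\}$, and it sends $\{x_i = r\}$ to $\{x_i = t^{a_i}r\}$. Since $Y - \{0\}$ is connected and disjoint from $\{x_n = 0\}$, we may assume (after possibly replacing $x_n$ by $-x_n$) that $Y - \{0\} \subset \{x_n > 0\}$, so the slice $Y \cap \{x_n = 1\}$ is precisely the cross-section whose orbit under $\varphi_t$ sweeps out all positive-$x_n$ slices.

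Now, given $\epsilon > 0$, set $t = \epsilon^{1/a_n}$. Then $\varphi_t(\{x_n = 1\}) = \{x_n = \epsilon\}$, and combined with $\varphi_t(Y) = Y$ we obtain
$$Y \cap \{x_n = \epsilon\} = \varphi_t\bigl(Y \cap \{x_n = 1\}\bigr) \subset \varphi_t(\{x_i = r\}) = \{x_i = t^{a_i}r\}.$$
Thus it suffices to take $r(\epsilon) := \epsilon^{a_i/a_n}\, r$ to finish the proof.

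There is no serious obstacle here: the entire content of the lemma is the observation that $\varphi_t$ simultaneously rescales the hyperplanes $\{x_n = \mathrm{const}\}$ and $\{x_i = \mathrm{const}\}$ by different, but explicitly computable, powers of $t$, and that it preserves $Y$. The only point deserving mention is that $\varphi_t$ is a bijection of $Y$, which turns the a priori inclusion $\varphi_t(Y \cap \{x_n = 1\}) \subset Y \cap \{x_n = \epsilon\}$ into an equality; otherwise we would only control a subset of the target slice.
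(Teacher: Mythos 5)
Your proof is correct and follows exactly the paper's approach: the paper's entire proof is the single line ``Take $r(\epsilon)=r\epsilon^{a_i/a_n}$,'' which is precisely the formula you derive. You have simply made explicit the justification via the scaling maps $\varphi_t$ that the authors left implicit.
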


\begin{proof} Take $\displaystyle
r(\epsilon)=r{\epsilon}^{\frac{a_i}{a_n}}.$
\end{proof}

Let $M=X_i\cap\{x_n=1\}$. Suppose that there exists an index $k$
such that $M\subset\{x_k=r\}$. Let us consider a projection
$P\colon\R^n\rightarrow\R^{n-1}$ defined as follows:
$$P(x_1,\dots,x_n)=(x_1,\dots,\hat{x_k},\dots,x_n).$$ Observe that,
for $j\neq k$, $P |_{X_j}$ is a bi-Lipschitz map and $P(X_j)$ is a
$(a_1,\dots,\hat{a_k},\dots,a_n)$-weighted homogeneous subset in
$\R^{n-1}$. Then we obtain our statement from the induction
hypotheses.

Now, let us suppose that $M\not \subset\{x_{n-1}=r\}$, for all $r$.
Let $T\colon M\times [0,+\infty)\rightarrow X_i$ be a map defined as
follows:
$$T(x_1,\dots,x_n,t)=(t^{\frac{a_1}{a_n}}x_1,\dots,t^{\frac{a_{n-1}}{a_n}}x_{n-1},t).$$
Clearly, the map $T$ is semialgebraic, injective and surjective, for
$t\neq 0$.

Let $\alpha\colon [0,\rho]\rightarrow X_i $ be a semialgebraic arc
in $X_i$ such that $\alpha(0)=0$, parameterized in such a way that
$\alpha^{\prime}(0)$ exists. Let
$\bar{\alpha}\colon[0,\rho]\rightarrow M\times [0,1]$ be a lifting
of $\alpha$, i.e. $T\circ \bar{\alpha}=\alpha.$ This lifting is
defined as follows: $\bar{\alpha}(s)=T^{-1}(\alpha(s))$, for $s\neq
0$. Since $\bar{\alpha}(s)$ is semialgebraic, then
$\displaystyle\lim_{s\to 0}\bar{\alpha}(s)$ exists and belongs to
$M\times \{0\}.$ By the same reason, $\displaystyle\lim_{s\to
0}\bar{\alpha}^{\prime}(s)$ also exists. Therefore, the arc
$\bar{\alpha}$ near $M\times \{0\}$ can be reparameterized in the
following way: $\bar{\alpha}(t)=(x_1(t),\dots,x_{n-1}(t),t)$. By
definition of the map $T$, we have
$$\alpha(t)=(t^{\frac{a_1}{a_n}}x_1(t),\dots,t^{\frac{a_{n-1}}{a_n}}x_{n-1}(t),t).$$
Clearly, $\displaystyle\frac{d}{dt}|_{t =0}(\alpha(t))$ is not
contained in the hyperplane $x_n=0$.

Let $\alpha_1,\alpha_2\colon [0,\rho]\rightarrow X_i$ be two arcs
such that $\alpha_1(0)=\alpha_2(0)=0.$ The arcs $\alpha_1$ and
$\alpha_2$ can be parameterized as follows:
$$\alpha_1(t)=(t^{\frac{a_1}{a_n}}y_1(t),\dots,t^{\frac{a_1}{a_{n-1}}}y_{n-1}(t),t)
\mbox{ and }
\alpha_2(t)=(t^{\frac{a_1}{a_n}}z_1(t),\dots,t^{\frac{a_1}{a_{n-1}}}z_{n-1}(t),t).$$
We obtain:
$$\|\alpha_1(t)-\alpha_2(t)\|_{max}= max\{t^{\frac{a_i}{a_n}}|y_i(t)-z_i(t)|; \
i=1,\dots,n-1\}.$$ Since $|y_i(t)-z_i(t)|$ is a bounded function and
$a_1\geq\cdots\geq a_{n-1}$, we have
$$\|\alpha_1(t)-\alpha_2(t)\|_{max} = at^{\lambda}+o(t^{\lambda})$$ with
$\lambda\geq \frac{a_{n-1}}{a_n}$.

On the other hand, since $M\not \subset\{x_{n-1}=r\}$, there exist
$y=(y_1,\dots,y_{n-1},1),z=(z_1,\dots,z_{n-1},1)\in M$ such that
$y_{n-1}\neq z_{n-1}$. By Theorem \ref{theorem_section_3}, the
leaves $\gamma_y$ and $\gamma_z$ have the order of contact
$\lambda(\gamma_y,\gamma_z)=\frac{a_{n-1}}{a_n}.$

The theorem is proved. \hfill{$\Box$}

\begin{proof}[Proof of Theorem \ref{2.6}]
Let $X\subset\R^3$ be a semialgebraic $(a_1,a_2,a_3)$-weighted
homogeneous surface with a connected local link at $0$. If
$X\cap\{x_3=0\}\neq\{0\}$, then, by the proof of the Theorem
\ref{2.5}, we obtain that $\beta(X,0)=1$.

Note that $\beta(X,0)$ can be equal to
$\displaystyle\frac{a_1}{a_3}$ only in the case that
$X\cap\{x_3=\epsilon\}$ is totally included in a line
$x_2=r(\epsilon)$. But, since the local link of $X$ at $0$ is
connected, it implies that $X\cap\{x_3=\epsilon\}$ is the set
defined by $x_3=\epsilon$, $x_2=r(\epsilon)$. Then $X$ is a union of
standard leaves of $\mathcal{F}_{(a_1,a_2,a_3)}$ passing through the
points belonging to the straight line $x_3=\epsilon$,
$x_2=r(\epsilon)$. Note, that if $X\cap\{x_3=0\}=\{0\}$ then $X$
cannot be closed.

The case $\displaystyle\beta(X,0)=\frac{a_1}{a_2}$ can occur if, and
only if, $X\subset\{x_3=0\}$. But, in this case, $\beta(X,0)=1$.
\end{proof}

\section{Order comparison lemma}\label{final_section}

Let $K$ be a field of germs of subanalytic functions
$f\colon(0,\epsilon)\rightarrow \R$. Let $\nu\colon K\rightarrow\R$
be a canonical valuation on $K$. Namely, if $f(t)=\alpha
t^{\beta}+o(t^{\beta})$ with $\alpha\neq 0$, we put $\nu(f)=\beta$.

Here we are going to prove a bit more general result such that Lemma
\ref{comparison1} is a partial case of it.

\begin{theorem}\label{final_theorem} Let $\| \cdot \|_S$ be a semialgebraic norm on
$\R^n$. Let $\gamma_1$ and $\gamma_2$ be two semianalytic arcs such
that $\gamma_1(0)=\gamma_2(0)=x_0 \in \R^n$. Let $\gamma_i^S(t)$ be
a parametrization of $\gamma_i$ such that
$\|\gamma_i^S(t)-x_0\|_S=t$, $i=1,2$. Let
$\lambda_S(\gamma_1,\gamma_2)=\nu(
\|\gamma_1^S(t)-\gamma_2^S(t)\|_S)$. Then
$\lambda_S(\gamma_1,\gamma_2)=\lambda(\gamma_1,\gamma_2)$.
\end{theorem}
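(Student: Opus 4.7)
The plan is to exploit bi-Lipschitz equivalence of semialgebraic norms together with the leading-term expansion available for subanalytic germs, reducing everything to a comparison of the Euclidean and $S$-parametrizations in terms of the common unit tangent vectors.

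First I would establish that every semialgebraic norm on $\R^n$ is bi-Lipschitz equivalent to the Euclidean one: compactness of the Euclidean unit sphere together with positive-definiteness and continuity of $\|\cdot\|_S$ produces constants $0 < c \leq C$ with $c\|x\| \leq \|x\|_S \leq C\|x\|$. From the fact that every non-negative subanalytic germ admits an expansion $f(t) = a t^{\nu(f)} + o(t^{\nu(f)})$ with $a > 0$, a two-sided bound $c' t^\mu \leq f(t) \leq C' t^\mu$ forces $\nu(f) = \mu$. Applying this to $\phi_i(t) := \|\gamma_i(t) - x_0\|_S$, with $\gamma_i$ the Euclidean parametrization, yields $\phi_i(t) = \alpha_i t + o(t)$ with $\alpha_i > 0$, hence $\phi_i^{-1}(s) = s/\alpha_i + o(s)$. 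Writing $\gamma_i(t) = x_0 + tv_i + R_i(t)$ with $v_i$ the unit Euclidean tangent vector at $x_0$ and $R_i(t) = o(t)$, continuity of $\|\cdot\|_S$ identifies $\alpha_i = \|v_i\|_S$.

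Then I would split into cases on the tangent vectors. If $v_1 \neq v_2$, the expansion $\|\gamma_1(t) - \gamma_2(t)\| = t\|v_1 - v_2\| + o(t)$ gives $\lambda = 1$; likewise $\gamma_i^S(s) = x_0 + s(v_i/\alpha_i) + o(s)$ with $v_1/\alpha_1$ and $v_2/\alpha_2$ distinct unit $S$-vectors yields $\lambda_S = 1$. If $v_1 = v_2 = v$, then $\alpha_1 = \alpha_2 = \alpha$, and setting $t_i := \phi_i^{-1}(s)$ I use the triangle decomposition
\[\gamma_1(t_1) - \gamma_2(t_2) = [\gamma_1(t_1) - \gamma_2(t_1)] + [\gamma_2(t_1) - \gamma_2(t_2)].\]
The first bracket has Euclidean valuation $\lambda$ in $t_1$, hence $S$-valuation $\lambda$ in $s$ by norm equivalence. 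The second bracket is bounded by $L|t_1 - t_2|$ via Lipschitzness of the Euclidean parametrization, so everything reduces to showing $\nu(t_1 - t_2) > \lambda$. From $\alpha(t_1 - t_2) = g_2(t_2) - g_1(t_1)$ with $g_i(t) := \phi_i(t) - \alpha t = \|tv + R_i(t)\|_S - \|tv\|_S$, this follows from $\nu(g_1 - g_2) > \lambda$; the latter uses $\nu(R_1 - R_2) = \lambda$ together with the Euclidean constraint $\|\gamma_i(t) - x_0\| = t$, which forces the $v$-component of $R_i$ to have valuation $2\nu(R_i^\perp) - 1$, strictly greater than $\nu(R_i^\perp)$, so that the ``tangential'' contribution to $g_i$ is of higher order than the ``transverse'' one.

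The main obstacle will be carrying out this last estimate when $\|\cdot\|_S$ is not differentiable at $v$, which is typical (e.g.\ the max norm is non-smooth wherever two coordinates achieve the max simultaneously). No Taylor expansion of the norm is available, so one must exploit the piecewise-semialgebraic structure of $\|\cdot\|_S$ on a neighborhood of $tv$ together with the Puiseux-type data of the $R_i$. A convenient simplification is the symmetry of the statement: since the Euclidean norm is itself semialgebraic, it suffices to establish the one-sided inequality $\lambda_S \geq \lambda$ and recover the reverse by exchanging the roles of the two norms.
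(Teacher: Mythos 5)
Your route is genuinely different from the paper's: the paper argues by contradiction, looking at the triangle with vertices $\gamma_1(t)$, $\gamma_2(t)$, $\gamma_2^S(\tau)$ and invoking a uniform angle bound for short chords of the boundary of the convex $S$-ball (its Lemma~\ref{final_lemma} of Section~\ref{final_section}), whereas you work with leading-term expansions and a reparametrization estimate. The difficulty is that the key quantitative claim in your main line is false. The assertion $\nu(g_1-g_2)>\lambda$, hence $\nu(t_1-t_2)>\lambda$, fails in general: take $\|\cdot\|_S=\|\cdot\|_1$ on $\R^2$, $v=e_1$, and arcs with $R_i(t)=c_i t^{\lambda}e_2+O(t^{2\lambda-1})e_1$, $c_1=1$, $c_2=2$, $\lambda>1$. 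The Euclidean constraint does push the $v$-component of $R_i$ to order $2\lambda-1$, exactly as you say, but $g_i(t)=\|te_1+R_i(t)\|_1-t=|c_i|t^{\lambda}+O(t^{2\lambda-1})$, so $g_1-g_2=-t^{\lambda}+\cdots$ and $\nu(t_1-t_2)=\lambda$ exactly. The reason is the one you yourself flag: the subgradient of $\|\cdot\|_S$ at $v$ need not be proportional to $v$, so Euclidean orthogonality of the leading term of $R_1-R_2$ to $v$ does not make that term invisible to the $S$-norm. With $\nu(t_1-t_2)=\lambda$, both brackets in your decomposition have valuation $\lambda$ and the main line only delivers $\lambda_S\geq\lambda$.

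The proposal is rescued by the symmetry remark you relegate to the final sentence; it should be the actual proof. For the one-sided inequality $\lambda_S\geq\lambda$ you only need $\nu(t_1-t_2)\geq\lambda$, which does follow from $\alpha(t_1-t_2)=g_2(t_2)-g_1(t_1)$ together with the Lipschitz bound $|g_1(t)-g_2(t)|\leq\|R_1(t)-R_2(t)\|_S\leq Ct^{\lambda}$ and $g_2(t_2)-g_2(t_1)=o(|t_1-t_2|)$; then both brackets are $O(s^{\lambda})$ and $\lambda_S\geq\lambda$. Every ingredient (norm equivalence, the subanalytic expansion, Lipschitzness of the distance parametrization) is valid for an arbitrary pair of semialgebraic norms, so exchanging the roles of $\|\cdot\|$ and $\|\cdot\|_S$ gives $\lambda\geq\lambda_S$ and hence equality. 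Structurally this mirrors the paper, which also proves only one inequality in detail and dismisses the converse as ``similar''; the difference is that the paper's one-sided bound comes from convexity of the $S$-balls and the chord-angle lemma, while yours comes from expansions. Once corrected to the non-strict estimate plus symmetry, your version is self-contained and arguably more explicit than the paper's rather terse passage from the angle lemma to the valuation inequality.
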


In order to prove this theorem we need the following lemma.

\begin{lemma}\label{final_lemma}
Let $M\subset\R^n$ be a semialgebraic convex compact subset such
that $0\in Int(M)$. Then, for each small $\epsilon>0$, there exists
a number $\delta >0$ such that, for each pair $x,y\in \partial M$
with $\|x-y\|<\epsilon$, the angle between $x$ and $x-y$ satisfies
the following inequality
$$\delta < \angle(x,x-y)<\pi -\delta.$$
\end{lemma}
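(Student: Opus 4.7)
The plan is a proof by contradiction combining compactness of $\partial M$, the semialgebraic Curve Selection Lemma, and a direct convexity argument. Fix $r>0$ with $B(0,r)\subset M$, which exists since $0\in Int(M)$. Take $\epsilon<2r$ and suppose no such $\delta>0$ exists; then there are sequences $x_n,y_n\in\partial M$ with $x_n\neq y_n$, $\|x_n-y_n\|<\epsilon$, and $\angle(x_n,x_n-y_n)\to 0$ or $\pi$. By compactness, pass to a subsequence with $x_n\to x^*$ and $y_n\to y^*$.

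The easy case is $x^*\neq y^*$: taking the limit of the angle condition forces $x^*-y^*$ to be parallel to $x^*$, so $y^*$ lies on the line through $0$ and $x^*$. By convexity of $M$ together with $0\in Int(M)$, each ray from $0$ meets $\partial M$ in a unique point, so two distinct points of $\partial M$ on a common line through $0$ lie on opposite rays and are separated by at least $2r$. This contradicts $\|x^*-y^*\|\leq\epsilon<2r$.

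The delicate case is $x^*=y^*$. Since the set of \emph{bad pairs} $\{(x,y)\in\partial M\times\partial M:x\neq y,\ \angle(x,x-y)\in[0,\delta_0]\cup[\pi-\delta_0,\pi]\}$ is semialgebraic and has $(x^*,x^*)$ in its closure for every $\delta_0>0$, the semialgebraic Curve Selection Lemma yields semialgebraic arcs $x(t),y(t)\in\partial M$, $t\in(0,\tau)$, with $x(t)\neq y(t)$, $x(t),y(t)\to x^*$, and $\angle(x(t),x(t)-y(t))\to 0$ or $\pi$. The unit chord direction $u(t)=(x(t)-y(t))/\|x(t)-y(t)\|$ is then semialgebraic, so it has a limit $u^*$, and the angle condition forces $u^*=\pm x^*/\|x^*\|$. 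In particular, the affine line $L^*$ through $x^*$ in direction $u^*$ passes through $0$.

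To conclude I would use the following convexity observation: since $L^*$ meets $Int(M)$ (indeed $0\in L^*\cap Int(M)$), the segment $L^*\cap M$ has length at least $2r$, and for any line $L$ close to $L^*$ the intersection $L\cap M$ is still a segment of nearly the same length whose relative interior lies in $Int(M)$. Applying this to the affine lines $L_t$ through $x(t)$ and $y(t)$, which converge to $L^*$, the two boundary points $x(t),y(t)\in\partial M\cap L_t$ cannot lie in the relative interior of $L_t\cap M$, so they must be its two endpoints; hence $\|x(t)-y(t)\|=\mathrm{length}(L_t\cap M)\geq r$ for small $t$, contradicting $x(t)-y(t)\to 0$. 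The main obstacle is this final step: one must leverage the semialgebraic structure (to upgrade sequences to arcs and to obtain a genuine limit direction $u^*$) together with convexity (to trap the vanishing chord along a line forced to pass through $0$, which is forbidden to carry two nearby boundary points).
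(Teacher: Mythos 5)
Your proof is correct, but it takes a genuinely different route from the paper's. The paper packages the statement into a compactness argument on the space of tangent lines: it introduces the compact semialgebraic set $Tang(M)$ of pairs $(x,l)$ with $l$ a supporting line at $x\in\partial M$, observes that $ang(x,l)=\sin(\angle(\overrightarrow{0x},l))$ is strictly positive there (a supporting line cannot pass through $0\in Int(M)$), hence is bounded below by some $\tilde\delta>0$, and then asserts that the secant configurations $Tang_{\epsilon}(M)$ Hausdorff-converge into $Tang(M)$ as $\epsilon\to 0$, which yields the uniform bound $\tilde\delta/2$. You instead argue by contradiction on a degenerating family of chords and, in the delicate case $x^*=y^*$, replace the tangent-line compactness by an explicit convexity estimate: a line meeting the ball $B(0,r)\subset M$ cuts $M$ in a chord of definite length whose two endpoints are the only boundary points on that line, so a vanishing chord cannot be nearly radial. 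This is more elementary and self-contained; it avoids the paper's unproved Hausdorff-limit assertion, and in fact it does not need semialgebraicity at all, since a subsequential limit of the unit chord directions $u_n=(x_n-y_n)/\|x_n-y_n\|$ on the compact sphere would serve in place of the Curve Selection Lemma. One small imprecision to repair: for a fixed $\delta_0$ the Curve Selection Lemma only produces an arc along which the angle stays in $[0,\delta_0]\cup[\pi-\delta_0,\pi]$, not one along which it tends to $0$ or $\pi$, so $L^*$ is only guaranteed to pass within distance $\|x^*\|\sin\delta_0$ of the origin rather than through it; choosing $\delta_0$ small enough that this distance is less than $r/2$, your chord-length argument applies verbatim, so this is cosmetic rather than a gap.
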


\begin{proof} Let $Tang(M)$ be a subset of $\R^n\times \R P^{n-1}$
of the pairs $(x,l)$ such that $l$ is a straight line, $l\cap
Int(M)=\emptyset$ and $x\in \partial M\cap l$. Clearly, $Tang(M)$ is
a compact semialgebraic subset of $\R^n\times \R P^{n-1}$. Let $ang
\colon \R^n\times \R P^{n-1}\rightarrow \R$ be a function defined as
follows:
$$ang(x,l)=\sin (\angle (\overrightarrow{0x},l)).$$
Observe that $\sin (\angle (\overrightarrow{0x},l))$ is a well
defined function. Since $Tang(M)$ is compact, then there exists
$\tilde{\delta}>0$ such that, for all $(x,l)\in Tang(M)$, we have
$ang(x,l)>\tilde{\delta}$.

Let $Tang_{\epsilon}(M)$ be a set of pairs $(x,l)$ where $x\in
\partial M$ and $l$ is a straight line passing through $x$ and some
$y\in \partial M$ such that $\|x-y\|\leq \epsilon$. Observe that
$Tang_{\epsilon}(M)$ is also a compact semialgebraic set. Since the
Hausdorff limit $\displaystyle\lim_{\epsilon \to
0}Tang_{\epsilon}(M)$ belongs to $Tang(M)$, there exists
$\tilde{\epsilon}>0$ such that $\displaystyle
ang(x,l)>\frac{\tilde{\delta}}{2}$, for all $(x,l)\in
Tang_{\tilde{\epsilon}}(M)$. It proves the lemma.
\end{proof}

\begin{proof}[Proof of Theorem \ref{final_theorem}.] Let $x_0$
and let $\gamma_1,\gamma_2$ be arcs satisfying the condition of the
theorem. Let us prove that
$\lambda_S(\gamma_1,\gamma_2)\geq\lambda(\gamma_1,\gamma_2)$.
Suppose that
$\lambda_S(\gamma_1,\gamma_2)<\lambda(\gamma_1,\gamma_2)$. Let
$\gamma_1(t)$ and $\gamma_2(t)$ be points such that
$\|\gamma_1(t)\|=\|\gamma_2(t)\|=t$. Let $\tau=|\gamma_1(t)\|_S$.
Let $\gamma_2^S(\tau)$ be a point on $\gamma_2$ such that
$\|\gamma_2^S(\tau)\|_S=\tau$. Thus, for small $t$, the angle at the
vertex $\gamma_2^S(\tau)$ of the triangle
$\gamma_1(t),\gamma_2(t),\gamma_2^S(\tau)$ must tend to zero. The
line defined by $\gamma_2(t)$ and $\gamma_2^S(\tau)$ tends to the
tangent line of $\gamma_2$ at $0$. Since the ball of radius $\tau$,
with respect to the norm $\|\cdot\|_S$, is a convex set and the
origin belongs to this ball, we obtain a contradiction to Lemma
\ref{final_lemma}.

Using the similar argument we can show that
$\lambda_S(\gamma_1,\gamma_2)\leq\lambda(\gamma_1,\gamma_2)$.
\end{proof}

\end{document}